\newcommand{\R}{\mathbb R}
\newcommand{\N}{\mathbb N}
\renewcommand{\P}{\mathbb P}
\newtheorem{theo}{Theorem}[section]
\newtheorem{lem}[theo]{Lemma}
\newtheorem{korr}[theo]{Corollary}
\newtheorem{remark}[theo]{Remark}
\newcommand{\BT}{\begin{theo}}
\newcommand{\ET}{\end{theo}}
\newcommand{\BL}{\begin{lem}}
\newcommand{\EL}{\end{lem}}
\newcommand{\EQN}{\end{eqnarray}}
\newcommand{\BQN}{\begin{eqnarray}}
\newcommand{\BQNY}{\begin{eqnarray*}}
\newcommand{\EQNY}{\end{eqnarray*}}
\newcommand{\equaldis}{\stackrel{d}{=}}
\def\IF{\infty}
\newcommand{\inr}{\in \R}
\newcommand{\abs}[1]{\lvert #1 \rvert}
\definecolor{c20}{rgb}{0.,0.7,0.}
\definecolor{c30}{rgb}{0.,0.,1.}
\definecolor{c40}{rgb}{1,0.1,0.7}
\definecolor{c50}{rgb}{1,0,0}
\def\pE#1{\textcolor{c20}{#1}}
\def\pE#1{#1}
\def\zE#1{\textcolor{c20}{#1}}
\def\zE#1{#1}
\def\dE#1{\textcolor{c30}{#1}}
\def\dB#1{\textcolor{c50}{#1}}
\def\dE#1{#1}
\def\dB#1{#1}
\def\AE#1{\textcolor{c50}{#1}}
\def\AE#1{#1}
\def\kE#1{\textcolor{c50}{#1}}
\def\kE#1{#1}
\def\RB#1{\textcolor{c50}{#1}}
\def\RB#1{#1}
\begin{document}

\begin{frontmatter}

  \title{Extremal behavior of squared Bessel processes attracted by the Brown-Resnick process}

  \author[sutd]{Bikramjit Das\corref{cor1}}
  \ead{bikram@sutd.edu.sg}

  \author[unil,epfl]{Sebastian Engelke}
  \ead{sebastian.engelke@unil.ch}

  \author[unil]{Enkelejd Hashorva}
  \ead{enkelejd.hashorva@unil.ch}

  \cortext[cor1]{Corresponding author}
 \address[sutd]{Singapore University of  Technology and Design, 20 Dover Drive, Singapore 138682}
  \address[unil]{University of Lausanne, Faculty of Business and Economics (HEC Lausanne),
Lausanne 1015, Switzerland}
  \address[epfl]{Ecole Polytechnique F\'ed\'erale de Lausanne, EPFL-FSB-MATHAA-STAT, Station 8, 1015 Lausanne, Switzerland}

  \begin{abstract}
    The convergence of properly time-scaled and normalized maxima of independent standard
    Brownian motions to the Brown-Resnick process
    is well-known in the literature. In this paper, we study the extremal functional behavior
    of non-Gaussian processes, namely
    squared Bessel processes and scalar products of Brownian motions.
    It is shown that maxima of independent samples of those processes
    converge weakly on the space of continuous functions to the Brown-Resnick process.
  \end{abstract}

  \begin{keyword}
    Bessel process \sep Brown-Resnick process \sep extreme value
    theory \sep functional convergence
  \end{keyword}

\end{frontmatter}

\section{Introduction}\label{sec:intro}

The study of Gaussian processes, their suprema and sojourns has been of interest to researchers for quite some time; see the excellent monographs by \citet{leadbetter1983extremes}, \citet{adler:1990}, \citet{berman:1992}, \citet{lifshits:1995}, \citet{Pit96}, {\citet{AdlerTaylor}} for a detailed overview.
These studies involve investigations of the asymptotic behavior of the maximum of a Gaussian (and sometimes non-Gaussian) process over a specific set under time and space scalings. On the other hand,
in spatial extreme value theory, the main focus is on {\it{pointwise maxima}} {of}
independent processes representing regular measurements of an environmental
quantity, for instance.\\
\RB{Suppose a large number, $n$,  of} particles start at the origin and
move along the trajectories of independent Brownian motions in \RB{an}
$m$-dimensional Euclidean space. Denote by $M_n(t)$, $t\geq 0$, the
maximal squared displacement from the origin of those $n$ particles at
time $t$. It is well-known that for a fixed $t > 0$ and normalizing
sequences $a_n >0$, $b_n\in\R$, we have the weak convergence
\begin{align}\label{M_n_conv}
  \lim_{n\to\infty} \P\left(\frac{{{M_n}}(t)- b_n t}{{a_n} t} \leq x \right) \zE{=} \Lambda(x), \qquad x\in\R,
\end{align}
where $\Lambda(x)=\exp(\exp(-x))$, $x \in \R$, denotes the Gumbel
distribution; see \pE{e.g.,} \cite[p.156]{embrechts:kluppelberg:mikosch:1997}.
In this paper we are interested in the functional convergence
of the quantity in \eqref{M_n_conv} on the space of continuous
functions. In the one-dimensional case, \citet{brown:resnick:1977} showed that
the functional limit is given by a stationary, max-stable process. This Brown-Resnick
process and its generalizations in \citet{kabluchko:schlather:dehaan:2009} and \citet{kabluchko:2011}
are now well-known in extreme value theory and have
recently found importance as models for spatial extreme weather events; see
\citet{davis:klupelberg:steinkohl:2011,davison:padoan:ribatet:2012, engelkeetal:2012b}.\\
The finite-dimensional distribution of a Brown-Resnick process can be naturally
\RB{identified as} the so-called H\"usler-Reiss distributions introduced in \citet{husler:reiss:1989}
\RB{which appear }as the limit of maxima of a triangular array of Gaussian random vectors.
Those distributions arise even in more general, non-Gaussian settings,
as shown in \citet{Hashorva:2008} and \citet{hashorva:kabluchko:wubker:2010}.
In fact the latter paper provides conditions for the weak convergence of
maxima of independent, multivariate \pE{chi-square} random vectors to the
H\"usler-Reiss distribution.
\RB{Such an observation naturally points us towards the question}  whether there are some non-Gaussian processes whose maxima are
attracted by the Brown-Resnick process \RB{under appropriate linear scaling}.

This is the principal focus of our paper which is organized as follows.
In Section \ref{sec:two} we introduce necessary notation, recall the definition of Brown-Resnick processes and
provide the two main theorems. They state the functional convergence
of maxima of independent squared Bessel processes and, furthermore, it is shown that
the Brown-Resnick process also appears as the limit of maxima processes
obtained by the scalar product of two independent, $m$-dimensional Brownian motions.
The main lemma, which might be of some independent interest, and the proofs
of the theorems are relegated to Section \ref{sec:proofs}.
Section \ref{sec:conclude} concludes the paper.
Further necessary tools can be found in the Appendix.

\section{Extremal behavior of squared Bessel processes and Brownian scalar product processes } \label{sec:two}

In the sequel, for $T>0$ we denote by $C[0,T]$ and $C[0,\infty)$ the space of real-valued continuous \AE{functions} on $[0,T]$
and $[0,\infty)$, respectively, {equipped} with the topology of uniform convergence (on bounded intervals).\\
Let $\{X_i, i\in\N\}$ be the points of a Poisson point process on $\R$ with intensity measure $e^{-x}dx,\, x \in \R$, and let {$\{B_i, i\in\N\}$ be independent}  standard \AE{Brownian} motions
on $[0,\infty)$ which are also independent of $\{X_i, i\in\N \}$.
The {original Brown-Resnick process initially presented in \citep{brown:resnick:1977} is denoted by} $M_B$ and defined as
\begin{align} \label{eq:originalBR}
M_B(t)=\max_{{i\in\N}}  \Bigl( X_i + B_i{(t)} - t/2\Bigr), \qquad t\geq 0.
\end{align}
{More generally, for a centered Gaussian process $\{\eta(t),t\inr \}$ with stationary increments and \AE{variance} function $\sigma^2(t)$ the corresponding max-stable, stationary Brown-Resnick process $M_\eta$ is defined by
\begin{align}\label{eq:standardBR}
  M_\eta(t)=\max_{{i\in\N}}\Bigl(X_i + \eta_i(t) - \sigma^2(t)/2\Bigr), \qquad t\geq 0,
\end{align}
where $\eta_i,i\in\N$, are independent and identically distributed (i.i.d.) copies of $\eta$, {{see \citet{kabluchko:schlather:dehaan:2009}, \citet{kabluchko:2011}, {\citet{dombry:eyi-menko:2012}}.}\\
Originally, the standard Brown-Resnick \kE{process} {was} derived as the limit of the maximum of i.i.d.\ Gaussian processes, namely
 Brownian motions and Ornstein-Uhlenbeck processes.
 Motivated by the recent findings of {\citet{hashorva:kabluchko:wubker:2010}}, in this section we show two other classes of non-Gaussian
processes, leading to the same limit process {{$M_{B}$}}. More precisely, we investigate \pE{chi-square}, or squared Bessel processes,
and scalar-product processes related to standard Brownian motions.\\
Let therefore $\left\{B_{ij}, i \in\N, 1\le j\le m\right\}$ be independent standard Brownian motions on $[0,\IF)$ and define for $i\in\N$
the squared Bessel process of dimension $m\geq 1$ as
\begin{align}
  \xi_i(t) & =B_{i,1}^2(t)+ \ldots +B_{i,m}^2(t), \qquad t\geq 0.
\end{align}
Hence,  $\{\xi_i, i\in\N\}$ are i.i.d.\ with one-dimensional marginals given by a $\chi^{2}_m$-distribution with $m$ degrees of freedom.
Then, for constants $a_n, b_n$ defined by
\begin{align}\label{def_constA}
{a_n = 2,} \quad b_n & = 2\ln n + (m-2) \kE{\ln( \ln n)}  - {2}\ln \Gamma\left(m/2\right), \quad n\ge 1,
\end{align}
the maximum $M_{n,{\xi}}(t)=\max\{\xi_1(t),\ldots,\xi_n(t)\}$ for {any} fixed $t > 0$ satisfies
\begin{align}\label{co:chi}
\lim_{n\to\infty}\P\left(\frac{{{M_{n,\xi}}}(t)- b_n t}{{a_n} t}\le x \right)=\Lambda(x), \qquad x\in\R.
\end{align}
\pE{In} their paper, \citet{hashorva:kabluchko:wubker:2010} {prove that}
the {normalized} maxima of independent \pE{chi-square} random vectors converges \pE{to the} H\"usler-Reiss distribution \citep{husler:reiss:1989} which are the finite dimensional distributions of the Brown-Resnick processes
$M_\eta$ defined above.}
On the other hand, \citet{brown:resnick:1977} \AE{showed} that \dE{the} rescaled \kE{maxima} of an independent sequence of rescaled Brownian motions tends to the Brown-Resnick process. Thus a Brown-Resnick limit for the \kE{maxima} of  squared Bessel processes is quite intuitive.
The \dE{sequence of processes} {{$M_{n,\xi},n\ge 1$}}, is defined on $C[0,\IF)$, but {{weak convergence of $M_{n,\xi}$}} holding on $C[0,T]$ for all $T>0$  implies convergence on $C[0,\IF)$  and proving convergence on $C[0,T]$ is similar to proving it for $C[0,1]$; see \citet{brown:resnick:1977}. For the sake of simplicity, we thus show weak convergence \kE{only on} $C[0,1]$.\\
For $1 \le i\le n$, $n\in\N$, define the local, or rescaled, processes}
\begin{align}\label{eq:local}
\xi_{i,n}(t)=\frac{\xi_i\left(1+t/{b_n}\right)- b_n\left(1+ t/{b_n}\right)}{2}
,\qquad t\ge 0.
\end{align}

Our first result below shows the functional convergence of the maximum process
$\AE{\max_{1 \le i \le n} {\xi}_{i,n}}$ to the standard Brown-Resnick process $M_B$.

\BT
\label{thm1}
We have the weak convergence, as $n\to\infty$,
\begin{align*}
  \max_{i=1,\dots,n} \xi_{i,n}(t)
  \stackrel{d}{\to } M_B(t), \quad t\in[0,1]
\end{align*}
on the space of continuous functions $C[0,1]$.

\ET

\def\anY{{a^*_n}}
\def\bnY{{b^*_n}}

Since Bessel processes are the norm of multivariate Brownian motions, we shall investigate further
the extremal behavior of the scalar product of two independent Brownian motion vector processes. Let therefore
 $\{ B_{i,j}, \widetilde{B}_{i,j}, i \in\N, 1\le j \le  m\}$ be independent standard Brownian motions on $[0,\IF)$ and define for $i\in\N$
\begin{align}
\gamma_i(t) & =B_{i,1}(t)\widetilde{B}_{i,1}(t)+ \ldots +B_{i,m}(t)\widetilde{B}_{i,m}(t), \quad t\in [0,\IF).
\end{align}
By Lemma \ref{lemProducts} in the Appendix it follows that for constants $\anY,\bnY$ defined by
\begin{align}\label{def_constB}
{\anY = 1,} \quad \bnY & = \ln n + (m/2-1) \ln( \ln n) - (m/2-1) \ln 2 - \ln \Gamma\left( m /2\right), \quad n\ge 1
\end{align}
the maximum process $M_{n,{\gamma}}(t)=\max\{\gamma_1(t),\ldots,\gamma_n(t)\}$ for a fixed $t > 0$ satisfies
\begin{align}\label{co:prod}
\lim_{n\to\infty}\P\left(\frac{M_{n,{\gamma}}(t)- \bnY t}{\anY t}\le x \right)=\Lambda(x), \qquad x\in\R.
\end{align}
Note in passing that $\anY, \bnY$ are however different than in the case of squared Bessel processes.
Similarly as above we define for $1 \le i\le n$ the local processes
\begin{align}\label{eq:local2}
\gamma_{i,n}(t)= \gamma_i\left(1+t/(2\bnY)\right)- \bnY\left(1+ t/(2\bnY)\right)
,\qquad t\ge 0.
\end{align}
We have the following result for the convergence of $\AE{\max_{1 \le i \le n} {\gamma}_{i,n}}$, as $n\to\infty$.

\BT
\label{thm2}
For $n\to\infty$, we have the weak convergence
\begin{align*}
  \max_{i=1,\dots,n} \gamma_{i,n}(t)
  \stackrel{d}{\to } M_{B}(t), \quad t\in[0,1]
\end{align*}
on the space of continuous functions $C[0,1]$.
\ET

\section{Proofs}\label{sec:proofs}

Let us remark, that the space $C[0,1]$ of continuous functions is not
locally compact. This fact prevents us to apply the standard
theory for Poisson point processes in extreme value theory.
In particular,  \cite[Theorem 5.3]{resnickbook:2007} is not applicable
for Poisson point processes on the space $C[0,1]$. We thus rely
on a similar technique as in the proof of Theorem 17 in \citet{kabluchko:schlather:dehaan:2009}
in order to show negligibility of lower order statistics.

We first prove the following main lemma, which \kE{is of some} independent
interest as a tool for showing weak convergence to the Brown-Resnick
process. For instance, it implies the weak convergence results in
\citet{brown:resnick:1977}.

\BL\label{main_lemma}
For $n\in\N$, $1\leq i \leq n$, let the following triangular arrays
be given, where the elements within the rows of each array are i.i.d.:
\begin{enumerate}
\item\label{assump1}
  Identically distributed random variables $Y_{i,n}$ \kE{satisfying}
  \BQN\label{Y11}
    \P( Y_{1,1} \kE{>u} ) = (1+o(1))K u^\beta e^{-cu}du, \quad u\to \IF,
  \EQN
  with constants $K, c > 0$, $\beta\in\R$.
   By Theorem 3.3.26 in \citet{embrechts:kluppelberg:mikosch:1997}, this implies that
  \BQN\label{Gumbel_conv}
    \lim_{n\to\infty} n\P( X_{1,n} \kE{>s}) =e^{-s},  \quad \forall s \inr,
  \EQN
  where $X_{i,n} = a_n^{-1}( Y_{i,n} - b_n )$ and
  \BQN\label{def_constC}
    {a_n = c^{-1},} \quad b_n = c^{-1}\left(\ln n +
    \beta \ln(c^{-1} \ln n) + \ln K \right), \quad n\ge 1.
  \EQN
  \kE{Assume further that for all large $r$ and any $p>0$}
   \BQN\label{conditionKK}
   \limsup_{n\to \IF} n\int_{-b_n/(2a_n)}^{-r}
e^{- x^2/p}\P(X_{i,n} \in dx)< \IF.
  \EQN

\item\label{assump2}
  Stochastic processes $\{ R_{i,n}(t): t \in [0,1]\}$, such that
  the vector $(X_{i,n},R_{i,n}(\cdot))$ has the same distribution
  as $(X_{i,n},\phi_{i,n}W_{i,n}(\cdot))$, where $W_{i,n}\sim \{W(t) : t\in[0,1]\}$
  are standard Brownian motions independent of the $X_{i,n}$, and $\phi_{i,n}$ are positive random variables, independent of $W_{i,n}$ \RB{such} that for some $q>0$
  \BQN\label{negl_cond_phi_n}
    \lim_{n\to\infty} n\P( \phi_{1,n} > q) = 0,
  \EQN
  and
  \BQN\label{negl_cond_phi}
    \lim_{n\to\infty} \P( | 1 - \phi_{1,n} | > \epsilon \,|\, X_{1,n} \in K ) = 0, \quad \forall \epsilon > 0
  \EQN
  for any compact set $K\subset \R$.

\item\label{assump3}
  Stochastic processes $\{ \delta_{i,n}(t): t \in [0,1]\}$, independent of $X_{i,n}$, such that
  \BQN
    \label{negl_cond_delta}&\lim_{n\to\infty} \P( \| \delta_{1,n}\| > \epsilon ) &= 0, \quad \forall \epsilon > 0,\\
    \label{negl_cond_delta_n}&\lim_{n\to\infty} n\P( \| \delta_{1,n}\| > C ) &= 0, \text{ for some } C>0.
  \EQN
\end{enumerate}
Then, we have the weak convergence
\begin{align}\label{BR_conv}
  \eta_{n}(t) := \max_{i = 1,\dots, n} \zeta_{i,n}(t) := \max_{i = 1,\dots, n} \Bigl(X_{i,n} + R_{i,n}(t) -t/2 + \delta_{i,n}(t)\Bigr)
  \stackrel{d}{\to } M_B(t), \quad t\in[0,1],
\end{align}
on the function space $C[0,1]$, where $\{M_B(t) : t\in[0,1]\}$ is the original
Brown-Resnick process \RB{given by \eqref{eq:originalBR}}.
\EL

\begin{remark} \label{rem32}
{If \eqref{Y11} holds, then condition \eqref{conditionKK} is satisfied if $Y_{1,1}$ possesses a density $h$ such that
for some $c>0$
$$\P(Y_{1,1}> u) = (1+o(1)) h(u)/c, \quad u\to \IF.$$}
\end{remark}
We will prove \pE{frist} the following useful result.

\BL\label{lemma_lower}
With the notation and under the assumptions of Lemma \ref{main_lemma},
for any $\epsilon > 0$ we can find constants $R,N > 0$ such that
for any $r\pE{>} R$ and $n\pE{>} N$, we have
\begin{align}\label{def_A_n}
  \P( A_n ) := \P\left( \exists t\in[0,1]: \eta_n(t) \neq \max_{i\in\{1,\dots,n\}, |X_{i,n}| < r} \zeta_{i,n}(t) \right) \leq \epsilon.
\end{align}
\EL

\begin{proof}
  We apply a similar technique as in the proof of Theorem 17 in \citet{kabluchko:schlather:dehaan:2009}.
  First note that
  \begin{align*}
    A_n \subset  C_n \cup D_{n}
    \cup \left(A_n \setminus \left[ C_n \cap D_{n} \right] \right),
  \end{align*}
  where for some $r_1>0$
  \begin{align*}
    C_n = \left\{ \inf_{t\in[0,1]} \eta_n(t) < -r_1 \right\},\qquad D_{n} = \bigcup_{i=1}^n \left\{ X_{i,n} > r \right\}.
  \end{align*}
  Clearly by \eqref{Gumbel_conv}, for $N$ and $R$ large enough it holds
  that $\P(D_n) = n\P(X_{1,n} > r) < \epsilon$, for any $n>N$, $r>R$.
  Moreover, note that $C_{n} \subset \bigcap_{i=1}^n F_{i,n}^c$, where
  \begin{align*}
    F_{i,n} = \left\{ X_{i,n} \in [-r,r], \inf_{t\in[0,1]} R_{i,n}(t) -t/2 + \delta_{i,n}(t)
    \geq r - r_1 \right\}.
  \end{align*}
  In view of assumption \eqref{negl_cond_phi}, for
  \begin{align}\label{Delta_def}
    \Delta_{i,n}(t) := W_{i,n}(t)(\phi_{i,n} - 1), \qquad t\in[0,1]
  \end{align}
  we obtain for any $\delta >0$
  \begin{align}\label{Delta_est}
    \P(\|\Delta_{i,n}\| > \delta | X_{i,n} \in [-r,r]) &\leq \P(\|W_{i,n}\| > \delta/\tau) + \P(|(\phi_{i,n} - 1)| > \tau | X_{i,n} \in [-r,r])\\
    \notag&\leq \overline{\Phi}(\delta/\tau) + \epsilon/2\\
    \notag&\leq \epsilon
  \end{align}
  for sufficiently small $\tau>0$ with  $\overline{\Phi}$ the \pE{tail} of \pE{an} $N(0,1)$ random variable.
  Further, using assumption \ref{assump2} of Lemma \ref{main_lemma}, \eqref{negl_cond_delta} and \eqref{Delta_est}, we obtain for any $\delta > 0$
  \begin{align*}
    \P&\left(\inf_{t\in[0,1]} R_{i,n}(t) -t/2 + \delta_{i,n}(t)
    < r - r_1 \Big| X_{i,n} \in [-r,r]\right)\\
    &\qquad \leq \P(\|\delta_{i,n}\| > \delta) + \P\left( \inf_{t\in[0,1]} W_{i,n}(t) -t/2 + \Delta_{i,n}(t) <  r - r_1 + \delta \Big| X_{i,n} \in [-r,r]\right)\\
    &\qquad\leq \P(\|\delta_{i,n}\| > \delta) + \P(\|\Delta_{i,n}\| > \delta | X_{i,n} \in [-r,r]) +
\P\left( \inf_{t\in[0,1]} W_{i,n}(t) <  r - r_1 + 2\delta +1/2\right)\\
&\qquad \leq \frac12,
  \end{align*}
  for $n$ and $r_1$ sufficiently large. Thus,  {by \eqref{Gumbel_conv}}
  \begin{align*}
    \P( F_{i,n}) \geq \frac12 \, \P( X_{i,n} \in [-r,r]) \geq \frac r n + o(1/n),\quad n\to\infty
  \end{align*}
  for $r$ large enough {and uniformly in $i\in \N$}, and consequently
  \begin{align*}
    \P(C_n) \;& \RB{\le} \left(1 - \P( F_{1,n})\right)^n \leq \left(1 - \frac r n + o(1/n)\right)^n \leq 2e^{-r} < \epsilon
  \end{align*}
  for $r$ and $n$ large. It remains to show \pE{that} $\P(A_n \setminus \left( C_n \cap D_{n} \right))$
  becomes small. To this end, define events
  \begin{align*}
    E_{i,n} = \Bigl\{ X_{i,n} < -r, \sup_{t\in[0,1]} \zeta_{i,n}(t) > -r_1 \Bigr\}
  \end{align*}
  and note that $A_n \setminus \left( C_n \cap D_{n} \right)$ is a subset
  of the union $\bigcup_{i=1}^n E_{i,n}$.
  Let $C>0$ be the constant in \eqref{negl_cond_delta_n} and recall the stochastic representation of $R_{i,n}$ from assumption \ref{assump2}. Then
  \begin{align}\label{est_Ein}
    \P(E_{i,n}) \leq & \P(\|\delta_{i,n}\| > C) + \P\left( X_{i,n} < -r, \sup_{t\in[0,1]}\Bigl( X_{i,n} + \phi_{i,n}W_{i,n}(t) - t/2\Bigr) > -r_1 - C \right).
  \end{align}
  For $n$ large enough, \eqref{negl_cond_delta_n} implies that the first summand is bounded by $\epsilon / n$.
  A coupling argument yields that the second summand can be bounded from above by
  \begin{align*}
    \P(\phi_{i,n} > q ) + \P\left( X_{i,n} < -r, \sup_{t\in[0,1]} \Bigl(X_{i,n} + qW_{i,n}(t)\Bigr) > -r_1 - C \right),
  \end{align*}
  where again the first summand is bounded by $\epsilon / n$ by \eqref{negl_cond_phi_n}. Clearly, we can estimate
  \begin{align*}
    \P\left(\sup_{t\in[0,1]} W_{i,n}(t)> u \right) \leq 2\overline{\Phi}(u) \leq e^{-u^2/2}
  \end{align*}
  for large $u>0$.
  Choosing $r > 0$ large enough, such that $(-r_1 -C -x)/q > -x /(2q)$ for all $x < - r$ thus gives
  \BQNY
    \lefteqn{\P \left( X_{i,n} < -r, \sup_{t\in[0,1]} \Bigl(X_{i,n} + qW_{i,n}(t) \Bigr)> -r_1 - C \right)}\\
  &\leq &
  \P \left(\sup_{t\in[0,1]} W_{i,n}(t) > \frac{-r_1 - C + b_n/(2a_n)}{q} \right) \\
  && +
\int_{-b_n/(2a_n)}^{-r}
    \P\left(\sup_{t\in[0,1]} W_{i,n}(t) > \frac{-r_1 -x -C}{q} \,\Big|\, X_{i,n} = x\right) \P(X_{i,n} \in dx)\\
    &\leq &e^{-(b_n/(4a_nq))^2/2} +
\int_{-b_n/(2a_n)}^{-r}
    e^{- x^2/(8q)} \P(X_{i,n} \in dx)\\
    &\leq & e^{-(b_n/(4a_nq))^2/2} +
    \frac{K'}{n} 
\\    & \leq & \epsilon / n,
  \EQNY
  with some constant $K'>0$ and $a_n$ and $b_n$ defined in \eqref{def_constC}.
  The second inequality above is a consequence of \eqref{conditionKK}.
  Collecting all parts together yields
  \begin{align*}
    \P\left(\bigcup_{i=1}^n E_{i,n}\right) \leq nP\left(E_{i,n}\right) \leq \epsilon
  \end{align*}
  and thus $\P(A_n) \leq 3\epsilon$ for all $n > N$ and $r > R$ with $N,R$ large enough. \end{proof}

\begin{korr}\label{cor1}
  With the notation and under the assumptions of Lemma \ref{main_lemma},
  for any $\epsilon > 0$ we can find an $N\in\N$, such that for all $n\pE{>} N$ we have
  \begin{align*}
    \P\left( \sup_{t\in[0,1]} \left| \eta_n(t) - \tilde\eta_n(t) \right| > \epsilon \right) \leq \epsilon,
  \end{align*}
  where
  \begin{align}\label{tilde_eta}
    \tilde\eta_n(t) = \max_{i = 1,\dots,n } \Bigl(X_{i,n} + R_{i,n}(t) -t/2\Bigr), \quad t\in[0,1].
  \end{align}
\end{korr}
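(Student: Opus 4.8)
The plan is to use that $\eta_n$ and $\tilde\eta_n$ differ only through the perturbations $\delta_{i,n}$ sitting inside the maximum, that $x\mapsto\max_i x_i$ is $1$-Lipschitz for the uniform norm, and that Lemma~\ref{lemma_lower} lets us discard all indices $i$ with $|X_{i,n}|\ge r$ before comparing the two maxima.

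Fix $\epsilon>0$. First I would apply Lemma~\ref{lemma_lower} twice, each time with level $\epsilon/4$: once to the array as given, and once to the array obtained by replacing every $\delta_{i,n}$ by the zero process, which leaves assumptions~\ref{assump1} and~\ref{assump2} untouched and satisfies~\ref{assump3} trivially; in this second application the quantity ``$\eta_n$'' is exactly $\tilde\eta_n$. This produces $R$ and $N_0$ so that, for one fixed choice $r>R$ and all $n>N_0$, there is an event $B_n$ with $\P(B_n^c)\le\epsilon/2$ on which, writing $K_r=\{i\le n:\ |X_{i,n}|<r\}$,
\begin{align*}
  \eta_n(t)=\max_{i\in K_r}\zeta_{i,n}(t)
  \qquad\text{and}\qquad
  \tilde\eta_n(t)=\max_{i\in K_r}\bigl(\zeta_{i,n}(t)-\delta_{i,n}(t)\bigr)
\end{align*}
for all $t\in[0,1]$. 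On $B_n$ the $1$-Lipschitz property then gives, for every $t$,
\begin{align*}
  \bigl|\eta_n(t)-\tilde\eta_n(t)\bigr|
  \le\max_{i\in K_r}\bigl|\delta_{i,n}(t)\bigr|
  \le\max_{i\in K_r}\|\delta_{i,n}\|,
\end{align*}
hence $\sup_{t\in[0,1]}|\eta_n(t)-\tilde\eta_n(t)|\le\max_{i\in K_r}\|\delta_{i,n}\|$.

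It remains to make $\P(\max_{i\in K_r}\|\delta_{i,n}\|>\epsilon)$ small. I would condition on $(X_{1,n},\dots,X_{n,n})$ and use that the $\delta_{i,n}$ are i.i.d.\ and independent of the $X_{i,n}$ by assumption~\ref{assump3}: given $|K_r|=k$, the variables $\|\delta_{i,n}\|$ with $i\in K_r$ are i.i.d.\ copies of $\|\delta_{1,n}\|$, so by $1-(1-p)^k\le kp$,
\begin{align*}
  \P\Bigl(\max_{i\in K_r}\|\delta_{i,n}\|>\epsilon\Bigr)
  \le\E|K_r|\cdot\P\bigl(\|\delta_{1,n}\|>\epsilon\bigr)
  =n\,\P\bigl(|X_{1,n}|<r\bigr)\,\P\bigl(\|\delta_{1,n}\|>\epsilon\bigr).
\end{align*}
By \eqref{Gumbel_conv} the factor $n\P(|X_{1,n}|<r)$ converges (to $e^{r}-e^{-r}$) and is therefore bounded for the fixed $r$, whereas $\P(\|\delta_{1,n}\|>\epsilon)\to0$ by \eqref{negl_cond_delta}; thus the left-hand side is $\le\epsilon/2$ for all $n$ beyond some $N\ge N_0$. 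Combining the two displays gives $\P(\sup_{t\in[0,1]}|\eta_n(t)-\tilde\eta_n(t)|>\epsilon)\le\P(B_n^c)+\epsilon/2\le\epsilon$ for $n>N$, as required.

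The only point needing care—and the main obstacle—is that the restriction to $K_r$ cannot be dispensed with: a crude union bound $\P(\max_{i\le n}\|\delta_{i,n}\|>\epsilon)\le n\P(\|\delta_{1,n}\|>\epsilon)$ is useless because \eqref{negl_cond_delta} provides no rate, while \eqref{negl_cond_delta_n} controls only the fixed level $C$, not an arbitrary $\epsilon$. It is precisely Lemma~\ref{lemma_lower} that cuts the maximum down to the $O(1)$ many indices in $K_r$, after which the non-quantitative bound \eqref{negl_cond_delta} suffices.
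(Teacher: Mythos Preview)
Your argument is correct and is essentially identical to the paper's proof: the paper also applies Lemma~\ref{lemma_lower} once to $\eta_n$ and once to $\tilde\eta_n$ (with $\delta_{i,n}\equiv 0$) to restrict both maxima to the indices in $K_r$, and then bounds the remaining term by $n\,\P(|X_{1,n}|<r)\,\P(\|\delta_{1,n}\|>\epsilon)$ using independence, \eqref{Gumbel_conv} and \eqref{negl_cond_delta}. The only cosmetic differences are that the paper uses a direct union bound rather than conditioning on $|K_r|$, and splits $\epsilon$ into thirds instead of $\epsilon/4+\epsilon/4+\epsilon/2$.
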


\begin{proof} \pE{For any $\epsilon>0$ we have}
  \BQNY
    \lefteqn{\P\left( \sup_{t\in[0,1]} \left| \eta_n(t) - \tilde\eta_n(t) \right| > \epsilon \right)}\\
    &\leq& \P\left( \exists t\in[0,1]: \eta_n(t) \neq \max_{i\in\{1,\dots,n\}, |X_{i,n}| < r} \Bigl(X_{i,n} + R_{i,n}(t) -t/2 + \delta_{i,n}(t)\Bigr) \right)\\
    &&+ \P\left( \exists t\in[0,1]: \tilde\eta_n(t)  \neq \max_{i\in\{1,\dots,n\}, |X_{i,n}| < r}
    \Bigl(X_{i,n} + R_{i,n}(t) - t/2 \Bigr)\right)\\
    &&+ \P\left( \exists i\in\{1,\dots, n\}: |X_{i,n}| < r, \|\delta_{i,n}\| > \epsilon \right)\\
    &\leq& \epsilon/3 + \epsilon/3 + n\P\left(|X_{i,n}| < r)\P(\|\delta_{i,n}\| > \epsilon\right)\\
    & \leq& \epsilon,
  \EQNY
  where for the first and second summand $r$ and $N$ can be chosen according to Lemma
  \ref{lemma_lower}. The last inequality then follows from assumptions \eqref{Gumbel_conv}
  and \eqref{negl_cond_delta}.
\end{proof}

\begin{proof}[Proof of Lemma \ref{main_lemma}]

  The proof will consist of two steps. First, we establish convergence of the
  finite dimensional margins in \eqref{BR_conv}, and, second, we show
  that the sequence of probability measures $\{ \eta_n \}_{n\in\N}$ on $C[0,1]$
  is tight. In fact, by Corollary \ref{cor1}, $\{ \eta_n \}_{n\in\N}$ converges
  weakly on $C[0,1]$ if and only if the sequence of probability measures
  $\{ \tilde\eta_n \}_{n\in\N}$ in \eqref{tilde_eta} converges weakly on $C[0,1]$,
  and, in this case, the limits are equal. In the sequel we will therefore
  consider $\{ \tilde\eta_n \}_{n\in\N}$ instead of $\{ \eta_n \}_{n\in\N}$.\\
  For the first part, let ${\bf{t}} = (t_1,\dots,t_m) \in [0,1]^m$ and
  $(y_1,\dots,y_m)\in \R^m$ be fixed. It follows
  from \kE{Lemma 4.1.3 in \citet{HRF}} that it suffices to proof the convergence
  \begin{align}\label{lim}
    \lim_{n\to\infty}  n \P( \forall j: & X_{1,n} + R_{1,n}(t_j) - t_j/2 > y_j ) = \int_\R e^{-y} \P\left(\forall j: W(t_j) - t_j/2 > y_j - y \right) \, {d y},
  \end{align}
  where $\{W(t): t\in[0,1]\}$ is a standard Brownian motion.
  To this end, we recall the definition of $\Delta_{1,n}$ in \eqref{Delta_def} and, for clarity, denote
  by $\{\overline W_{1,n}(t) = W_{1,n}(t) - t/2: t\in[0,1]\}$ the drifted process. For arbitrary $\delta, r > 0$ we obtain the estimate
  \BQN
    \notag\P( \forall j:  X_{1,n} + \overline W_{1,n}(t_j) + \Delta_{1,n}(t_j) > y_j)
    &\leq &\P(\forall j:  X_{1,n} + \overline W_{1,n}(t_j) > y_j - \delta, |X_{1,n}| < r)\\
    \notag
    && + \P(\forall j:  X_{1,n} + \overline W_{1,n}(t_j) + \Delta_{1,n}(t_j) > y_j, |X_{1,n}| > r)\\
    \label{summands}
    &&+ \P( \|\Delta_{1,n}\| > \delta, |X_{1,n}| < r).
  \EQN
   Furthermore, as $n\to\infty$, the first summand fulfills
  \begin{align*}
    n\P(\forall j:  X_{1,n} + \overline W_{1,n}(t_j) > y_j - \delta, |X_{1,n}| < r)
    &= \int_{-r}^r  \P(\forall j: \overline W_{1,n}(t_j) > y_j - y - \delta) n\P(X_{1,n} \in dy)\\
    &\to  \int_{-r}^r e^{-y} \P(\forall j: \overline W_{1,1}(t_j) > y_j - y - \delta)\, dy,
  \end{align*}
  since by \eqref{Gumbel_conv}, $n\P(X_{1,n} \in dy)$ converges weakly to $e^{-y}dy$, as $n\to\infty$. Now, in view of the calculations following \eqref{est_Ein} for the second summand in \eqref{summands},
  and \eqref{Delta_est} and \eqref{Gumbel_conv} for the third summand in \eqref{summands},
  we have
  \begin{align}
    \notag\limsup_{n\to\infty} \, nP( \forall j:  X_{1,n} &+ \overline W_{1,n}(t_j) + \Delta_{1,n}(t_j) > y_j) \\
    \notag&\leq \lim_{r\to\infty} \int_{-r}^r e^{-y} \P(\forall j: \overline W_{1,1}(t_j) > y_j - y - \delta)\, dy\\
    \notag&\qquad + \lim_{r\to\infty}\limsup_{n\to\infty} \, n\P(\forall j:  X_{1,n} + \overline W_{1,n}(t_j) + \Delta_{1,n}(t_j) > y_j, |X_{1,n}| > r)\\
    \notag&\qquad + \lim_{r\to\infty}\limsup_{n\to\infty}\, n\P( \|\Delta_{1,n}\| > \delta, |X_{1,n}| < r)\\
    \label{limsup}& = \int_\R e^{-y} \P(\forall j: \overline W_{1,1}(t_j)  > y_j - y - \delta)\, dy.
  \end{align}
  Similarly, we can show that
  \begin{align}\label{liminf}
    \liminf_{n\to\infty} \, nP( \forall j:  X_{1,n} &+ \overline W_{1,n}(t_j) + \Delta_{1,n}(t_j) > y_j) \geq \int_\R e^{-y} \P(\forall j: \overline W_{1,1}(t_j) > y_j - y + \delta)\, dy.
  \end{align}
  Since $\delta>0$ was arbitrary, \eqref{lim} follows from \eqref{limsup} and \eqref{liminf} as $\delta\searrow 0$, and thus the convergence of finite
  dimensional margins.

  In order to show the tightness of the sequence $\{ \tilde\eta_n \}_{n\in\N}$
  we note that the sequence $\{ \tilde\eta_n(0) \}_{n\in\N}$ is tight
  since it equals $\{ \max_{i=1,\dots,n} X_{i,n} \}_{n\in\N}$ which converges
  to the Gumbel distribution by \eqref{Gumbel_conv}.
  For a function $g \in C[0,1]$ and any $\kappa>0$, we define
  the modulus of continuity $\omega_\kappa(g)$
  \begin{align*}
    \omega_\kappa(g) = \sup_{s,t\in[0,1], |s-t| \leq \kappa} |g(s) - g(t)|.
  \end{align*}
  By Theorem 7.3 in \citet{billingsley:1999} it suffices to find for any
  $\epsilon, \alpha > 0$ a $\kappa > 0$ and $N\in\N$ such that
  \begin{align*}
    \P( \omega_\kappa(\tilde \eta_n) > \alpha ) < \epsilon, \quad n > N.
  \end{align*}
  By choosing $\kappa>0$ small enough, we get for any $r>0$
  \begin{align}
    \notag\P( \omega_\kappa(X_{1,n} + \overline W_{1,n} + \Delta_{i,n}) > \alpha \,|\, X_{i,n}\in[-r,r])  &\leq
    \P( \omega_\kappa(\overline W_{1,n}) > \alpha/2) +  \P(\|\Delta_{1,n}\| > \alpha/2 \,|\, X_{i,n}\in[-r,r]) \\
    \label{modulus}&\leq \epsilon/2
  \end{align}
  for all $n \pE{>} N$ with $N$ large enough, because of
  the fact that $\overline W_{1,n}$ is independent of $X_{1,n}$
  and its distribution does not depend on $n$,
  and condition \eqref{Delta_est}.
  We proceed by noting that for any $n$, we have
  \begin{align}\label{set_inclusion}
    \{\omega_\kappa(\tilde\eta_n) > \alpha \}
    \subset \left( \{\omega_\kappa(\tilde\eta_n) > \alpha \} \cap \tilde A_n^C \right)
    \cup \tilde A_n \subset \left(\bigcup_{i=1}^n G_{i,n},\right)\cup \tilde A_n,
  \end{align}
  where
  \begin{align*}
    \tilde A_n = \left\{\exists t\in[0,1]: \tilde\eta_n(t)  \neq \max_{i\in\{1,\dots,n\}, |X_{i,n}| < r}
    \Bigl(X_{i,n} + R_{i,n}(t) - t/2 \Bigr) \right\}
  \end{align*}
  and
  \begin{align*}
    G_{i,n} &= \left\{X_{i,n}\in [-r,r], \omega_\kappa(X_{i,n} +\overline W_{1,n} + \Delta_{i,n}) > \alpha \right\}.
  \end{align*}
  \pE{Conditioning} we obtain for any $\epsilon'>0$
  \begin{align*}
    \P(G_{i,n}) = \P( \omega_\kappa(X_{i,n} + \overline W_{1,n} + \Delta_{i,n}) > \alpha \,|\, X_{i,n}\in[-r,r]) \P( X_{i,n} \in [-r,r])
    \leq \epsilon' \, \P( X_{i,n} \in [-r,r])
  \end{align*}
  by \eqref{modulus} and $\kappa>0$ small enough, for any $n> N$.
  Thus, since by \eqref{Gumbel_conv}, $\P( X_{i,n} \in [-r,r])$ is
  of order $1/n$, we {have} for any $n> N$ with $N$ large enough
  \begin{align}\label{G_i_n}
    \P\left(\bigcup_{i=1}^n G_{i,n}\right) \leq n \P(G_{i,n}) < \epsilon/2.
  \end{align}
  Consequently, \eqref{set_inclusion} together with \eqref{def_A_n} and \eqref{G_i_n} implies $\P(\omega_\kappa(\tilde\eta_n) > \alpha) < \epsilon$, for $n> N$, and hence the tightness of $\{ \tilde\eta_n \}_{n\in\N}$.
\end{proof}

\begin{proof}[Proof of Theorem \ref{thm1}]   For $i\in\N$ and $1\leq j \leq m$, write
  \begin{align*}
    B_{i,j}\left(1+t/{b_n}\right) & {\equaldis}  B_{i,j}(1)+ \frac{1}{\sqrt{b_n}} B_{i,j}^*(t), \quad t\ge 0,
  \end{align*}
where $ \{B_{i,j}^*(t), i\in\N, 1\le j \le m\}$ are {independent}
standard Brownian motions {being further} independent of
$ \{B_{i,j}(1), i\in\N, 1\le j \le m\}$. We thus have
\begin{align}
  \notag\xi_{i,n}(t) &\stackrel{d}{=}   \frac 12 \left(\sum_{j=1}^m (B_{i,j}(1))^2 +
  \frac{{2}}{\sqrt{b_n}}\sum_{j=1}^m B_{i,j}(1)B_{i,j}^*(t) +
  \frac{1}{b_n}\sum_{j=1}^m (B_{i,j}^*(t))^2 - b_n\left(1+t/b_n\right)\right) \\
  \notag& =  \frac{\sum_{j=1}^m (B_{i,j}(1))^2- b_n}{2}+ \left(\frac{1}{\sqrt{b_n}}\sum_{j=1}^m B_{i,j}(1)B_{i,j}^*(t) -\frac t2\right) +
  \frac{1}{2b_n}\sum_{j=1}^m (B_{i,j}^*(t))^2\\
  \label{split}&=:  X_{i,n} + R_{i,n}(t) - t/2 + \delta_{i,n}(t), \quad t\in[0,1].
\end{align}
We check the assumptions of Lemma \ref{main_lemma}.
By Lemma \ref{lemProducts} in the Appendix, $Y_{i,n} := 2X_{i,n} + b_n$ \kE{satisfies} for $u\to\infty$
\begin{align*}
  \P\left(Y_{1,1}> \kE{u} \right) = (1+o(1)) \frac{1}{2^{ m/2} \Gamma(m/2)} \kE{u}^{m/2-1} \exp(-\kE{u}/2)= 
  \pE{(1+o(1))2\P\left(Y_{1,1} \in du \right)} 
\end{align*}
and hence assumption \ref{assump1} of Lemma \ref{main_lemma} holds (recall Remark \ref{rem32}).

A simple calculation with characteristic functions yields for $X_{i,n}$ and $R_{i,n}$ in \eqref{split}
the joint stochastic representation
\begin{align*}
  \left( X_{i,n}, R_{i,n}\right)
  \stackrel{d}{=} \left(X_{i,n}, \phi_{i,n} W_{i,n}(\cdot) \right),\qquad \phi_{i,n}:= \sqrt{2X_{i,n}/b_n + 1},
\end{align*}
where $\{W_{i,n}(t): t\in[0,1]\}$ are i.i.d.\ standard Brownian motions, independent of the $X_{i,n}$.
Clearly, it holds for any $q > 1$ that
\begin{align*}
  \lim_{n\to\infty} n\P( \phi_{1,n} > q ) = \lim_{n\to\infty} n\P( X_{1,n} > b_n(q^2 - 1)/2 ) = 0
\end{align*}
since $X_{i,n}$ is in the max-domain of attraction of the Gumbel distribution and $\lim_{n\to\infty} b_n(q^2 - 1)/2 =\IF$.
Furthermore, for arbitrary $\epsilon,r > 0$ we trivially have
\begin{align*}
  \lim_{n\to\infty} \P( |1 - \phi_{1,n}| > \epsilon | X_{1,n} \in [-r,r]) = 0.
\end{align*}
Thus, assumption \ref{assump2} of Lemma \ref{main_lemma} is fulfilled.

We note that $\delta_{i,n}$ in \eqref{split} is independent of $X_{i,n}$ and for any $\epsilon > 0$
\begin{align*}
  \P( \| \delta_{1,n}\| > \epsilon ) =
  \P\left(\sup_{t\in[0,1]} \Biggl(\sum_{j=1}^m (B_{i,j}^*(t))^2\Biggr) > 2b_n\epsilon  \right) \to 0,\quad n\to\infty.
\end{align*}
Moreover, for a $C>1$, \kE{in view of the Piterbarg inequality given in Proposition 3.2 in \citet{TanHash13} (see also Theorem 8.1 in \citet{Pit96}, or in \citet{Pit2001}), we have for some positive constant $\lambda$}
\begin{align*}
  n\P( \| \delta_{1,n}\| > C )& =
  n\P\left(\sup_{t\in[0,1]} \Biggl(\sum_{j=1}^m (B_{i,j}^*(t))^2\Biggr) > 2b_nC  \right) \\
  &\le n b_n^\lambda e^{-b_n C}\\
  &\to   0,\quad n\to\infty
\end{align*}
and thus assumption \ref{assump3} of Lemma \ref{main_lemma} holds, and the assertion of the theorem follows.
\end{proof}

\begin{proof}[Proof of Theorem \ref{thm2}]
  For $i\in\N$ and $1\leq j \leq m$, write
  \begin{align*}
    B_{i,j}\left(1+t/(2\bnY)\right) {\equaldis}  B_{i,j}(1)+ \frac{1}{\sqrt{2\bnY}} B_{i,j}^*(t),
    \quad \widetilde{B}_{i,j}\left(1+t/(2\bnY)\right) {\equaldis}  \widetilde{B}_{i,j}(1)+ \frac{1}{\sqrt{2\bnY}} \widetilde{B}_{i,j}^*(t) \quad t\ge 0,
  \end{align*}
where $ \{B_{i,j}^*(t),\widetilde{B}_{i,j}^*, i\in\N, 1\le j \le m\}$ are {independent}
standard Brownian motions {being further} independent of
$ \{B_{i,j}(1),\widetilde{B}_{i,j}, i\in\N, 1\le j \le m\}$. We thus have
\begin{align}
  \notag\gamma_{i,n}(t) &\stackrel{d}{=}  \sum_{j=1}^m B_{i,j}(1)\widetilde{B}_{i,j}(1) +
  \frac{{1}}{\sqrt{2\bnY}}\sum_{j=1}^m B_{i,j}(1)\widetilde B_{i,j}^*(t) \\
  \notag& \qquad +\frac{{1}}{\sqrt{2\bnY}}\sum_{j=1}^m \widetilde B_{i,j}(1) B_{i,j}^*(t) +
  \frac{1}{2\bnY}\sum_{j=1}^m B_{i,j}^*(t)\widetilde B_{i,j}^*(t) - \bnY\left(1+t/(2\bnY)\right) \\
  \notag&=  \left(\sum_{j=1}^m B_{i,j}(1)\widetilde{B}_{i,j}(1) - \bnY\right) + \left(\frac{{1}}{\sqrt{2\bnY}}\sum_{j=1}^m B_{i,j}(1)\widetilde B_{i,j}^*(t) + \frac{{1}}{\sqrt{2\bnY}}\sum_{j=1}^m \widetilde B_{i,j}(1) B_{i,j}^*(t) - \frac t2  \right)\\
  \notag&\qquad + \frac{1}{2\bnY}\sum_{j=1}^m B_{i,j}^*(t)\widetilde B_{i,j}^*(t) \\
  \label{split2}&=:  X_{i,n} + R_{i,n}(t) - t/2 + \delta_{i,n}(t), \quad t\in[0,1].
\end{align}
As above, we only have to check the assumptions of Lemma \ref{main_lemma}.
By Lemma \ref{lemProducts} in the Appendix, $Y_{i,n} := X_{i,n} + \bnY$ satisfies for $u\to\infty$
\begin{align*}
  \P\left(Y_{1,1}\kE{>u} \right) = (1+o(1)) \frac{1}{2^{ m/2} \Gamma(m/2)} u^{m/2-1} \exp(-\kE{u})
=    \pE{(1+o(1))\P\left(Y_{1,1} \in du \right)} 
\end{align*}
and hence assumption \ref{assump1} of Lemma \ref{main_lemma} holds {(recall again Remark \ref{rem32})}.

A simple calculation with characteristic functions yields for $X_{i,n}$ and $R_{i,n}$ in \eqref{split2}
the joint stochastic representation
\begin{align*}
  \left( X_{i,n}, R_{i,n}\right)
  \stackrel{d}{=} \left(X_{i,n}, \phi_{i,n} W_{i,n}(\cdot) \right),\qquad \phi_{i,n}:= \sqrt{\frac{\Psi_{i,n}}{2b_n}},
    \qquad \Psi_{i,n} := \sum_{j=1}^m \left(B^2_{i,j}(1) + \widetilde{B}^2_{i,j}(1)\right),
\end{align*}
where $\{W_{i,n}(t): t\in[0,1]\}$ are i.i.d.\ standard Brownian motions, independent of the $X_{i,n}$.
Clearly, since $\Psi_{i,n}$ is \pE{chi-square}   distributed with $2m$ degrees of freedom, it holds for any $q > 1$ that
\begin{align*}
  \lim_{n\to\infty} n\P( \phi_{i,n} > q ) &=\lim_{n\to\infty} n\P\left( \kE{\Psi_{i,n}}  > 2b_nq^2 \right)\\
  &\leq \lim_{n\to\infty} n K \exp(-b_nq^2)
   = 0,
\end{align*}
where $K>0$ is a constant. Furthermore, for arbitrary $\epsilon,r > 0$ we have
\begin{align}
\P\;(& |1 - \phi_{i,n}| > \epsilon | X_{i,n} \in [-r,r])  \label{fraction}\\
  & = \P\left( \kE{\Psi_{i,n}} \notin [2b_n(1-\epsilon)^2,2b_n(1+\epsilon)^2]\, \Big |\, \sum_{j=1}^m B_{i,j}(1)\widetilde{B}_{i,j}(1) \in [b_n -r,b_n +r]\right) \nonumber\\
 &= \frac{\P\left( \kE{\Psi_{i,n}}\notin [2b_n(1-\epsilon)^2,2b_n(1+\epsilon)^2]\, , \, \sum_{j=1}^m B_{i,j}(1)\widetilde{B}_{i,j}(1) \in [b_n -r,b_n +r]\right)}
  {\P\left( \sum_{j=1}^m B_{i,j}(1)\widetilde{B}_{i,j}(1) \in [b_n -r,b_n +r]\right)}.\nonumber
\end{align}
By Lemma \ref{lemProducts}, for large $n\in\N$ the denominator can be bounded by
\begin{align}
  \label{num}\P\left( \sum_{j=1}^m B_{i,j}(1)\widetilde{B}_{i,j}(1) \in [b_n -r,b_n +r]\right) \geq
  K' \left( (b_n - r)^{m/2-1} e^r  - (b_n + r)^{m/2-1} e^{-r}\right) e^{-b_n},
\end{align}
for some constant $K'>0$.
For the numerator we first note that
\begin{align*}
\kE{\Psi_{i,n}}=  \sum_{j=1}^m \left(B^2_{i,j}(1) + \widetilde{B}^2_{i,j}(1)\right) \geq 2 \sum_{j=1}^m B_{i,j}(1)\widetilde{B}_{i,j}(1)
\end{align*}
and thus for $n$ large enough it suffices to consider
\begin{align}
  \notag\P&\left( \kE{\Psi_{i,n}} > 2b_n(1+\epsilon)^2 \, , \, 2\sum_{j=1}^m B_{i,j}(1)\widetilde{B}_{i,j}(1) \in [2b_n -2r,2b_n + 2r]\right)\\
  \notag&\leq \P\left( \sum_{j=1}^m \left(B_{i,j}(1) + \widetilde{B}_{i,j}(1)\right)^2 > 2b_n(1+\epsilon)^2 + 2b_n - 2r \right)\\
  \notag&\leq \P\left( 2 \chi^2_m > 4b_n(1+\epsilon) - 2r \right)\\
  \label{denom}& \leq K'' (2b_n(1+\epsilon) - r)^{m/2-1} e^{-b_n(1+\epsilon)},
\end{align}
where $\chi^2_m$ is a \pE{chi-square} distribution with $m$ degrees of freedom and $K''>0$ is a constant.
From \eqref{num} and \eqref{denom} it is now obvious, that the probability in
\eqref{fraction} turns to $0$, as $n\to\infty$.
Thus, assumption \ref{assump2} of Lemma \ref{main_lemma} is fulfilled.

Note that $\delta_{i,n}$ in \eqref{split2} is independent of $X_{i,n}$. For any $\epsilon > 0$
\begin{align*}
  \P( \| \delta_{1,n}\| > \epsilon ) &=
  \P\left(\sup_{t\in[0,1]}  \left|\sum_{j=1}^m B_{i,j}^*(t)\widetilde B_{i,j}^*(t)\right|
  > 2b_n\epsilon  \right)\\
  &\leq m \P\left(  \sup_{t\in[0,1]}\left|B_{i,1}^*(t)\right| \sup_{t\in[0,1]}\left| \widetilde B_{i,1}^*(t)\right|
  > 2b_n\epsilon/m  \right)\\
  &\leq M \exp\left(- \frac{b_n\epsilon}{m}\right)\\
  &= M' n^{-\epsilon/m} (\ln(n))^{-(m/2 - 1)\epsilon/m)}\to 0,\quad n\to\infty,
\end{align*}
where the second inequality follows from {Lemma \ref{lem0} in Appendix}
and $M,M'$ are positive constants. Clearly, for $C > m$ we further have
\begin{align*}\label{delta_sec}
  n\P( \| \delta_{1,n}\| > C )  \to 0,\quad n\to\infty.
\end{align*}
Thus assumption \ref{assump3} of Lemma \ref{main_lemma} holds, and the proof is complete.
\end{proof}

{\section{Conclusion and further work} \label{sec:conclude}

Brown-Resnick processes have gained a lot of attention recently both because of their  theoretical intricacies as well as their potential applicability, especially in space-time modeling of
extreme events; see \citet{davison:padoan:ribatet:2012}. To this end, it is an important
fact that this class of max-stable processes naturally appears as max-limits of
Gaussian processes (cf.\ \citet{kabluchko:schlather:dehaan:2009},\citet{kabluchko:2011}).
We have shown that these processes appear more generally as limits of maxima of not only Gaussian,
but also squared Bessel processes and Brownian scalar product processes.
Further generalizations are under investigation. A recent work  by \citet{engelkeetal:2012}
shows that for instance that H\"usler-Reiss type \pE{limit distributions} are obtained for non-identically
distributed independent  Gaussian random vectors.
A natural extension could be thus to consider maxima of non-identically distributed  independent
Gaussian processes and their functional limits. Furthermore, the independence assumption
can be \pE{eventually} relaxed regarding the paper as in \citet{Hashorva:Weng}, so that the limit process still remains Brown-Resnick.
In a different direction,  there has been some developments in simulating Brown-Resnick
processes \citep{engelke:kabluchko:Schlather:2012,oestingetal:2012}.
An alternative formulation as the limit of other processes  as described in this paper
can potentially lead to further techniques for simulation.

\section*{Acknowledgement}
The authors are grateful to an anonymous referee for several
comments and corrections that considerably improved the paper.
The authors kindly acknowledge partial support by the Swiss National Science Foundation grants 200021-1401633/1, 200021-134785  and
 the project RARE -318984 (a Marie Curie FP7 IRSES Fellowship).
 Bikramjit Das would also like to thank RiskLab for partial financial support.

\section*{Appendix}\label{sec:appendix}

\BL\label{lem0}  Let $X_{i}, i=1,2$, be two positive independent random variables such that
 \BQN\label{cc}
  \P\left(X_i> x\right) =(1+o(1)) C_i x^{\alpha_i} \exp(- L_i x^{p_i}),
 \EQN
 with $L_i,C_i,p_i,i=1,2$ positive constants, $\alpha_1,\alpha_2\in \R$. Then
 as $x\rightarrow \infty $ we have
\BQN
\label{qTang}  \P\left(X_1X_2> x\right)&=&(1+o(1))
\Bigl(\frac{2\pi p_{2}L_{2}}{p_{1}+p_{2}}\Bigr)%
^{1/2}C_{1}C_{2}A^{p_{2}/2+\alpha _{2}-\alpha _{1}}x^{\frac{2p_{2}\alpha
_{1}+2p_{1}\alpha _{2}+p_{1}p_{2}}{2(p_{1}+p_{2})}} \notag\\
&&\times \exp \left( -(L_{1}A^{-p_{1}}+L_{2}A^{p_{2}})x^{\frac{p_{1}p_{2}}{%
p_{1}+p_{2}}}\right),
\EQN
where $ A =   [(p_1L_1)/(p_2 L_2)]^{1/(p_1+p_2)}$. If further $X_1$ possesses a density  $h_1$ which is bounded and ultimately decreasing
such that $h_1(x)=(1+o(1)) L_1p_1 x^{p_1-1} \P\left(X_1> x\right)$ as $x\to \IF$, then $X_1X_2$ possesses a density $h$ satisfying
 \BQN\label{resDens} h(x) &=& (1+o(1)) L_1p_1 A^{-p_1} x^{p_1p_2/(p_1+p_2)- 1} \P\left(X_1X_2> x\right), \quad x\to \IF.
\EQN
\EL

\begin{proof}
The tail asymptotics of $X_1X_2$ is \dB{proved} in \citet{arendarczyk:debicki:2011} whereas \eqref{resDens} is given in \cite{HAWengStatistics2013}, Corollary 2.2.  An alternative somewhat shorter proof of \eqref{qTang} is derived using the following arguments:
We have  for some $0<l_{1}<1 < l_{2}<\infty $  and  $z_{x} = Ax^{{p_{1}}/{(p_{1}+p_{2})}}, A = [(p_1 L_1)/(p_2 L_2)]^{1/(p_1+p_2)} $
\begin{eqnarray*}
\P({X_1X_2}> x)&{\sim}&C_{1}C_{2}p_{2}L_{2}x^{\alpha
_{1}}z_{x}^{p_{2}+\alpha _{2}-1-\alpha _{1}}\int_{l_{1}Ax^{\frac{p_{1}}{%
p_{1}+p_{2}}}}^{l_{2}Ax^{\frac{p_{1}}{p_{1}+p_{2}}}}\exp (
-L_{1}\left( x/y\right) ^{p_{1}}-L_{2}y^{p_{2}}) \mathrm{d}y \\
&{\sim}& C_{1}C_{2}p_{2}L_{2}x^{\alpha
_{1}}z_{x}^{p_{2}+\alpha _{2}-\alpha _{1}}\int_{l_{1}}^{l_{2}}\exp (
-L_{1}(x/z_{x})^{p_{1}}\left( 1/y\right)
^{p_{1}}-L_{2}z_{x}^{p_{2}}y^{p_{2}}) \mathrm{d}y \\
&{=}& C_{1}C_{2}p_{2}L_{2}x^{\alpha
_{1}}z_{x}^{p_{2}+\alpha _{2}-\alpha _{1}}\int_{l_{1}}^{l_{2}}\exp (
-x^{\frac{p_{1}p_{2}}{p_{1}+p_{2}}%
}[L_{1}A^{-p_{1}}y^{-p_{1}}+L_{2}A^{p_{2}}y^{p_{2}}]) \mathrm{d}y.
\end{eqnarray*}%
Since the function $\psi (y)=L_{1}A^{-p_{1}}y^{-p_{1}}+L_{2}A^{p_{2}}y^{p_{2}}$ {attains} its minimum in $[l_1,l_2]$
at $1$, applying the Laplace approximation we obtain
\begin{eqnarray*}
\int_{l_{1}}^{l_{2}}\exp (-x^{\frac{p_{1}p_{2}}{p_{1}+p_{2}}%
}[L_{1}A^{-p_{1}}y^{-p_{1}}+L_{2}A^{p_{2}}y^{p_{2}}]) \mathrm{d}y {\sim} \frac{\sqrt{2\pi }}{\sqrt{x^{\frac{p_{1}p_{2}}{p_{1}+p_{2}}}\psi ^{\prime
\prime }(1)}}\exp \left( -\psi (1)x^{\frac{p_{1}p_{2}}{p_{1}+p_{2}}%
}\right),\quad x\rightarrow \infty ,
\end{eqnarray*}%
where
\begin{equation*}
\psi (1)=L_{1}[(p_{1}L_{1})/(p_{2}L_{2})]^{-%
\frac{p_{1}}{p_{1}+p_{2}}}+L_{2}[(p_{1}L_{1})/(p_{2}L_{2})]^{\frac{p_{2}}{%
p_{1}+p_{2}}}, \quad
\psi ^{\prime }(1)=0, \quad \psi ^{\prime \prime}(1)=L_{2}A^{p_{2}}p_{2}(p_{1}+p_{2})>0,
\end{equation*}
hence the claim follows.
\end{proof}

\BL\label{lemProducts}
If $X_i,Y_i,i\ge 1$, are independent $N(0,1)$ Gaussian random variables, then for any positive integer $k,m$, as $x\to \IF$
we have
\BQN
   \P\left( \sum_{i=1}^m X_i Y_i > x\right) &=&  \frac{ (1+o(1))}{2^{ m/2} \Gamma(m/2)} x^{m/2-1} \exp(-x)
   = (1+o(1)) f_{m}(x),
\EQN
\BQN\label{gamma}
  \P\left(\sum_{i=1}^m X_i^2  > x\right) &=&  \frac{ (1+o(1))}{2^{ m/2-1} \Gamma(m/2)} x^{m/2-1} \exp(-x/2) = (1+o(1))
  2 g_m(x),
\EQN
where $f_m$ and $g_m$ are the densities of $\sum_{i=1}^m X_i Y_i$ and $\sum_{i=1}^m X_i^2$, respectively.
Furthermore $\sum_{i=1}^m X_i Y_i$ is in the Gumbel max-domain of attraction with norming constants
$$ a_n^*= 1, \quad b_n^* = 
\ln n + (m/2- 1) \ln (\ln n) - (m/2 -1)\ln  2  - \ln \Gamma(m/2).$$

\EL
\begin{proof} The proof follows from Example 5 and 6 in \citet{HashKorshPit}. We give here a direct proof utilizing Lemma \ref{lem0}. Since  $\sum_{i=1}^m X_i Y_i $ is symmetric about 0 and
$$ \abs{\sum_{i=1}^m X_i Y_i } \equaldis \abs{X_1 \sqrt{ \sum_{i=1}^m Y_i^2}}=: \abs{X_1} \abs{ Z},$$
with $Z$ being independent of $X_1$. The asymptotic behavior of $Z^2$ follows immediately from the properties of Gamma random variables as given in
\eqref{gamma}. {Consequently,} applying Lemma \ref{lem0} we obtain 
\BQNY
  \P\left( \sum_{i=1}^m X_i Y_i > x \right) &=& \frac 12
  \P\left( \abs{\sum_{i=1}^m X_i Y_i} > x\right)\\
&=& \frac 1 2    \P\left({ X_1^2 Z^2 > x^2}\right)\\
&=& (1+o(1)) \frac{ 1}{2^{ m/2} \Gamma(m/2)} x^{m/2-1} \exp(-x), \quad x\to \IF,
\EQNY
and thus the norming constants can be easily found; see e.g.,  \cite[p.155]{embrechts:kluppelberg:mikosch:1997}, hence the claim follows.
\end{proof}

\bibliographystyle{model1b-num-names}
\bibliography{bibfilenew}
\end{document}